\documentclass[12pt,leqno]{article}
\usepackage{amsmath,amssymb,amscd,latexsym,paralist,bm}
\usepackage[all]{xy}

\usepackage[all]{xy}

\newcommand{\C}{{\mathbb{C}}}
\newcommand{\F}{{\mathbb{F}}}
\newcommand{\G}{\mathbb{G}}
\newcommand{\Pa}{{\mathbb{P}}}
\newcommand{\Q}{{\mathbb{Q}}}
\newcommand{\oQ}{\overline{\Q}}

\newcommand{\Z}{{\mathbb{Z}}}

\newcommand{\oZ}{\overline{\Z}}

\newcommand{\id}{\mathrm{id}}
\renewcommand{\mod}{\mathrm{mod}\,}
\newcommand{\red}{\mathrm{red}}

\newcommand{\spec}{\mathrm{spec}\,}
\newcommand{\Aut}{\mathrm{Aut}}
\newcommand{\Ext}{\mathrm{Ext}}

\newcommand{\Fr}{\mathrm{Fr}}

\newcommand{\GL}{\mathrm{GL}\,}
\newcommand{\Hom}{\mathrm{Hom}}

\newcommand{\ogamma}{\overline{\gamma}}

\newcommand{\tK}{\tilde{K}}
\newcommand{\tk}{\tilde{\kappa}}

\newcommand{\Eh}{{\mathcal E}}
\newcommand{\Fh}{{\mathcal F}}
\newcommand{\Gh}{{\mathcal G}}

\newcommand{\Oh}{{\mathcal O}}

\newcommand{\Th}{{\mathcal T}}

\newcommand{\Yh}{\mathcal{Y}}
\newcommand{\Zh}{\mathcal{Z}}

\newcommand{\emm}{{\mathfrak{m}}}
\newcommand{\eo}{\mathfrak{o}}

\newcommand{\eX}{{\mathfrak X}}

\newcommand{\ox}{\overline{x}}

\newcommand{\oy}{\overline{y}}
\newcommand{\oF}{\overline{\mathbb{F}}}

\newcommand{\tC}{\tilde{C}}

\newcommand{\iso}{\stackrel{\sim}{\longrightarrow}}
\newcommand{\blambda}{\bm{\lambda}}
\newcommand{\tgamma}{\tilde{\gamma}}
\newcommand{\verk}{\mbox{\scriptsize $\,\circ\,$}}

\newtheorem{theorem}{Theorem}

\newtheorem{prop}[theorem]{Proposition}

\newtheorem{example}[theorem]{Example}



\newenvironment{rem}{\noindent {\bf Remark}}{}

\newenvironment{proof}{\noindent {\bf Proof}}{\mbox{}\hspace*{\fill} $\Box$}
\parskip2ex plus0.5ex minus0.5ex
\parindent0.em
\topmargin0cm
\oddsidemargin0.5cm
\textwidth78ex
\begin{document}
\title{Representations attached to vector bundles on curves over finite and $p$-adic fields, a comparison}
\author{Christopher Deninger}
\date{\ }
\maketitle
\thispagestyle{empty}
\section{The comparison}
\label{sec:1}

In \cite{DW2} and \cite{DW4} a partial analogue of the classical Narasimhan--Seshadri correspondence between vector bundles and representations of the fundamental group was developed. See also \cite{F} for a $p$-adic theory of Higgs bundles. Let $\eo$ be the ring of integers in $\C_p = \hat{\oQ}_p$ and let $k = \eo / \emm = \oF_p$ be the common residue field of $\oZ_p$ and $\eo$. Consider a smooth projective (connected) curve $X$ over $\oQ_p$ and let $E$ be a vector bundle of degree zero on $X_{\C_p} = X \otimes \C_p$. If $E$ has potentially strongly semistable reduction in the sense of \cite{DW4} Definition 2, then for any $x \in X (\C_p)$ according to \cite{DW4} Theorem 10 there is a continuous representation
\begin{equation}
  \label{eq:1}
  \rho_{E,x} : \pi_1 (X,x) \longrightarrow \GL (E_x) \; .
\end{equation}
We now describe a special case of the theory where one can define the reduction of $\rho_{E,x} \mod \emm$. Assume that we are given the following data:

i) A model $\eX$ of $X$ i.e.~a finitely presented proper flat $\oZ_p$-scheme $\eX$ with $X = \eX \otimes_{\oZ_p} \oQ_p$,\\
ii) A vector bundle $\Eh$ over $\eX_{\eo} = \eX \otimes_{\oZ_p} \eo$ extending $E$.

Such models $\eX$ and $\Eh$ always exist. Consider the special fibre $\eX_k = \eX \otimes_{\oZ_p} k = \eX_{\eo} \otimes_{\eo} k$ and set $\Eh_k = \Eh \otimes_{\eo} k$, a vector bundle on $\eX_k$. We assume that $\Eh_k$ restricted to $\eX^{\red}_k$ is strongly semistable of degree zero in the sense of section \ref{sec:2} below.

In this case we say that $\Eh$ has strongly semistable reduction of degree zero on $\eX_{\eo}$. Then \cite{DW2} provides a continuous representation
\begin{equation}
  \label{eq:2}
  \rho_{\Eh , x_{\eo}} : \pi_1 (X,x) \longrightarrow \GL (\Eh_{x_{\eo}}) \; ,
\end{equation}
which induces \eqref{eq:1}. Here $x_{\eo} \in \eX (\eo) = X (\C_p)$ is the section of $\eX$ corresponding to $x$ and $\Eh_{x_{\eo}} = \Gamma (\spec \eo , x^*_{\eo} \Eh)$ is an $\eo$-lattice in $\Eh_x$.\\
Denoting by $x_k \in \eX_k (k) = \eX^{\red}_k (k)$ the reduction of $x_{\eo}$, we have $\Eh_{x_{\eo}} \otimes_{\eo} k = \Eh_{x_k}$ the fibre over $x_k$ of the vector bundle $\Eh_k$.

The aim of this note is to describe the reduction $\mod \emm$ of $\rho_{\Eh ,x_{\eo}}$ i.e. the representation
\begin{equation}
  \label{eq:3}
  \rho_{\Eh , x_{\eo}} \otimes k : \pi_1 (X,x) \longrightarrow \GL (\Eh_{x_k})
\end{equation}
using Nori's fundamental group scheme \cite{N}.

Let us recall some of the relevant definitions. Consider a perfect field $k$ and a reduced complete and connected $k$-scheme $Z$ with a point $z \in Z (k)$. A vector bundle $H$ on $Z$ is {\it essentially finite} if there is a torsor $\lambda : P \to Z$ under a finite group scheme over $k$ such that $\lambda^* H$ is a trivial bundle. Nori has defined a profinite algebraic group scheme $\pi (Z,z)$ over $k$ classifying the essentially finite bundles $H$ on $Z$. Every such bundle corresponds to an algebraic representation
\begin{equation}
  \label{eq:4}
  \bm{\lambda}_{H,z} : \pi (Z,z) \longrightarrow \mathbf{GL}_{H_z} \; .
\end{equation}
The group scheme $\pi (Z,z)$ also classifies the pointed torsors under finite group schemes on $Z$. If $k$ is algebraically closed, it follows that the group of $k$-valued points of $\pi (Z,z)$ can be identified with Grothendieck's fundamental group $\pi_1 (Z,z)$. On $k$-valued points the representation $\bm{\lambda}_{H,z}$ therefore becomes a continuous homomorphism 
\begin{equation}
  \label{eq:5}
  \lambda_{H,z} = \bm{\lambda}_{H,z} (k) : \pi_1 (Z,z) \longrightarrow \GL (H_z) \; .
\end{equation}
We will show the following result:

\begin{theorem}
  \label{t1}
With notations as above, consider a vector bundle $\Eh$ on $\eX_{\eo}$ with strongly semistable reduction of degree zero. Then $\Eh^{\red}_k$, the bundle $\Eh_k$ restricted to $\eX^{\red}_k$ is essentially finite. For the corresponding representation:
\[
\lambda = \lambda_{\Eh^{\red}_k , x_k} : \pi_1 (\eX^{\red}_k , x_k) \longrightarrow \GL (\Eh_{x_k}) \; ,
\]
the following diagram is commutative:
\[
\xymatrix{
\pi_1 (X,x) \ar[r]^{\rho_{\Eh , x_{\eo}} \otimes k} \ar[d] & \GL (\Eh_{x_k}) \ar@{=}[dd] \\
\pi_1 (\eX , x) \ar@{=}[d] &  \\
\pi_1 (\eX^{\red}_k , x_k) \ar[r]^{\lambda} & \GL (\Eh_{x_k}) \; .
}
\]
\end{theorem}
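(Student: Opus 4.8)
The plan is to build both homomorphisms in the bottom row of the diagram from a single finite étale covering that simultaneously trivializes $\Eh_k^{\red}$ and computes the $p$-adic parallel transport, and then to check that the reduction map on fibres intertwines them. A useful preliminary observation is that both $\rho_{\Eh,x_{\eo}} \otimes k$ and $\lambda$ are continuous homomorphisms of the profinite group $\pi_1(X,x)$ into $\GL(\Eh_{x_k}) = \GL_n(\oF_p)$ with $\oF_p$ discrete, hence have finite image and factor through finite étale coverings; so it suffices to match them on a common such covering. First I would settle the essential finiteness statement: by hypothesis $\Eh_k^{\red}$ is strongly semistable of degree zero on the reduced, proper, connected $\oF_p$-scheme $\eX_k^{\red}$, and by the theory of such bundles developed in section \ref{sec:2} (for smooth curves this is the classical equivalence between strong semistability of degree zero and essential finiteness in Nori's sense) the bundle $\Eh_k^{\red}$ is essentially finite. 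This yields $\bm{\lambda}_{\Eh_k^{\red},x_k}$ and hence $\lambda$. Since $\lambda$ has finite image, after descending the finitely presented datum $(\eX_k^{\red},\Eh_k^{\red})$ to a finite field and invoking the Lange--Stuhler criterion on Frobenius periodicity, the representation $\lambda$ is realized by a finite étale Galois covering $\beta : Z \to \eX_k^{\red}$, pointed over $x_k$, on which $\Eh_k^{\red}$ becomes trivial; then $\lambda$ is the resulting action of $\Gal(Z/\eX_k^{\red})$ on the fibre $\Eh_{x_k}$.

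Next I would import the geometric comparison of fundamental groups. Because $\oZ_p$ is strictly Henselian with residue field $k$ and $\eX$ is proper, the category of finite étale coverings of $\eX$ is equivalent to that of its special fibre, and finite étale coverings are insensitive to nilpotents; this gives the identification $\pi_1(\eX,x) = \pi_1(\eX_k^{\red},x_k)$ used on the left edge of the diagram. Moreover the specialization homomorphism $\pi_1(X,x) \to \pi_1(\eX,x)$ induced by $X = \eX_{\oQ_p} \hookrightarrow \eX$ is surjective, so the Galois covering $\beta$ lifts uniquely to a finite étale covering of $\eX$ whose generic fibre is a finite étale covering $\alpha : Y \to X$. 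This $\alpha$ is one of the coverings occurring in the system that defines $\rho_{\Eh,x_{\eo}}$, and pulling $\Eh$ back along the model of $\alpha$ produces a bundle whose reduced special fibre $(\alpha^\ast\Eh)_k^{\red}$ is trivial, by construction of $\beta$.

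It then remains to reduce the $p$-adic parallel transport modulo $\emm$ on this covering. Recall from \cite{DW2} that $\rho_{\Eh,x_{\eo}}$ is assembled from the reductions of $\Eh$ modulo powers of $p$, the transport of a path $\gamma$ being computed, after passing to a covering that trivializes the reduction, as the deck-transformation action on the fibre of the trivialized bundle. For the covering $\alpha$ produced above this deck action is, upon reduction modulo $\emm$, exactly the action of $\Gal(Z/\eX_k^{\red})$ on $\Eh_{x_k}$ that defines $\lambda$; the identifications $\Eh_{x_{\eo}} \otimes_{\eo} k = \Eh_{x_k}$ and $\pi_1(\eX,x) = \pi_1(\eX_k^{\red},x_k)$ match the two fibres and the two structure groups. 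Chasing $\gamma \in \pi_1(X,x)$ through the diagram, its image $\overline{\gamma}$ under specialization acts on $\Eh_{x_k}$ by this reduced deck transformation, which is simultaneously $\lambda(\overline{\gamma})$ and $(\rho_{\Eh,x_{\eo}} \otimes k)(\gamma)$; this is the asserted commutativity.

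The main obstacle is precisely the compatibility invoked in the last step. The transport of \cite{DW2} is defined by a limiting process over models and congruences modulo $p^n$, whereas $\lambda$ is defined at finite level by an honest finite étale torsor on the special fibre. The crux is therefore to show that the reduction modulo $\emm$ of this limit is already computed at the finite level of the covering $\alpha$: that the trivialization of $\Eh_k^{\red}$ provided by $\beta$ is compatible with the successive trivializations modulo $p^n$ used to build the transport, so that no genuinely $p$-adic monodromy survives the reduction beyond the Galois action on the special fibre. Verifying this compatibility — equivalently, that the cofinal system of covers defining $\rho_{\Eh,x_{\eo}}$ may be chosen so as to reduce to $\beta$ — is where the real work lies; once it is in place the diagram chase above is routine.
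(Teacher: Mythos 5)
There is a genuine gap, and it sits exactly where the paper has to work hardest. You assert that, after descending to a finite field and invoking Lange--Stuhler, ``the representation $\lambda$ is realized by a finite \'etale Galois covering $\beta : Z \to \eX_k^{\red}$ \dots on which $\Eh_k^{\red}$ becomes trivial.'' That is false in general. Theorem \ref{t2} only gives a trivializing morphism of the form $\varphi : Y \xrightarrow{F^s} Y \xrightarrow{\pi} Z$, i.e.\ a finite \'etale covering \emph{precomposed with a power of Frobenius}, and the Frobenius factor cannot be dropped: an $sss$-bundle is essentially finite, hence trivialized by a torsor under a finite group scheme, but that group scheme may have an infinitesimal part. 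Already a degree-zero line bundle of order $p$ in the multiplicative ($\mu_p$-) part of the Picard scheme of an ordinary curve over $\oF_p$ is $sss$ and essentially finite but is not trivialized by any finite \'etale covering. Since Proposition \ref{t2n} computes $\lambda_{H,z}$ as a deck-transformation action only when the \'etale covering actually trivializes $H$, your recipe for $\lambda$ breaks down at this point. The paper's proof is organized precisely around this obstacle: it does not compute $\lambda_{\Eh^{\red}_k,x_k}$ directly, but instead passes to the Frobenius pullback $\Gh_k = (F^s\otimes k)^{*}\Eh'_k$, which \emph{is} trivialized by an honest finite \'etale covering $\pi_k:\Yh_k\to\eX'_k$, applies Proposition \ref{t2n} there, and then transports the answer back using Proposition \ref{t3} (Frobenius induces an isomorphism on fundamental groups) via diagram \eqref{eq:16}. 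Correspondingly, the covering $\Zh\to\eX$ used to compute $\rho_{\Eh,x_{\eo}}\otimes k$ is \emph{not} a finite \'etale covering of $\eX$ lifting your $\beta$: it is built by lifting only the \'etale part $\pi_0$ to characteristic zero and then base-changing a characteristic-zero lift of $F^s$ on $\Pa^N$ along a projective embedding, so it is finite over $X$ but \'etale only over a dense open $U\subset X$. Your architecture, which works entirely with finite \'etale coverings of $X$ and of $\eX$, cannot produce the coverings that actually occur in the definition of $\rho_{\Eh,x_{\eo}}$.

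Two smaller points. First, the identification $\pi_1(\eX,x)=\pi_1(\eX^{\red}_k,x_k)$ and the surjectivity of specialization are fine and agree with the paper. Second, you candidly flag the compatibility of the reduction mod $\emm$ of the limiting construction of $\rho_{\Eh,x_{\eo}}$ with the finite-level torsor as ``where the real work lies'' and leave it open; in the paper this is discharged by the explicit formula \eqref{eq:7} for $\rho_{\Eh,x_{\eo}}\otimes k$ coming from the construction in \cite{DW2} (properties {\bf a}--{\bf d} of $\Zh\to\eX$), together with the independence of that formula from the choice of $\Zh$. So even setting aside the \'etale-trivializability error, the proposal is an outline with its central verification missing rather than a proof. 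To repair it you would need to (i) replace the \'etale trivializing covering by the \'etale-after-Frobenius one from Theorem \ref{t2}, (ii) prove and use the Frobenius invariance of $\pi_1$ (Proposition \ref{t3}) to relate $\lambda_{\Eh^{\red}_k}$ to $\lambda_{(F^{s})^{*}\Eh^{\red}_k}$, and (iii) construct a mixed-characteristic covering of $\eX$ realizing the Frobenius power on the special fibre, which is the content of the long construction in Section \ref{sec:3}.
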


In particular, the reduction $\mod \emm$ of $\rho_{\Eh , x_{\eo}}$ factors over the specialization map $\pi_1 (X,x) \to \pi_1 (\eX^{\red}_k , x_k)$. In general this is not true for $\rho_{\Eh , x_{\eo}}$ itself according to Example \ref{t6}.

This note originated from a question of Vikram Mehta. I am very thankful to him and also to H\'el\`ene Esnault who once drew my attention to Nori's fundamental group.

\section{$sss$-bundles on curves over finite fields}
\label{sec:2}

In this section we collect a number of definitions and results related to Nori's fundamental group \cite{N}. The case of curves over finite fields presents some special features.

Consider a reduced complete and connected scheme $Z$ over a perfect field $k$ with a rational point $z \in Z (k)$. According to \cite{N} the $\otimes$-category of essentially finite vector bundles $H$ on $Z$ with the fibre functor $H \mapsto H_z$ is a neutral Tannakian category over $k$. By Tannakian duality it is equivalent to the category of algebraic representations of an affine group scheme $\pi (Z,z)$ over $k$ which turns out to be a projective limit of finite group schemes. 

Let $f : Z \to Z'$ be a morphism of reduced complete and connected $k$-schemes. The pullback of vector bundles induces a tensor functor between the categories of essentially finite bundles on $Z'$ and $Z$  which is compatible with the fibre functors in $f (z)$ and $z$. By Tannakian functoriality we obtain a morphism $f_* : \pi (Z,z) \to \pi (Z' , f(z))$ of group schemes over $k$. If $k$ is algebraically closed the induced map on $k$-valued points 
\[
\pi_1 (Z,z) = \pi (Z,z) (k) \to \pi (Z' , f (z)) (k) = \pi_1 (Z' , f (z))
\]
is the usual map $f_*$ between the Grothendieck fundamental groups. 

We will next describe the homomorphism
\[
\lambda_{H,z} = \blambda_{H,z} (k) : \pi_1 (Z,z) = \pi (Z,z) (k) \to \GL(H_z) 
\]
in case $H$ is trivialized by a finite \'etale covering. Consider a scheme $S$ with a geometric point $s \in S (\Omega)$. We view $\pi_1 (S,s)$ as the automorphism group of the fibre functor $F_s$ which maps any finite \'etale covering $\pi : S' \to S$ to the set of points $s' \in S'(\Omega)$ with $\pi (s') = s$.

\begin{prop}
  \label{t2n}
Let $Z$ be a reduced complete  and connected scheme over the algebraically closed field $k$ with a point $z \in Z (k)$. Consider a vector bundle $H$ on $Z$ for which there exists a connected finite \'etale covering $\pi : Y \to Z$ such that $\pi^* H$ is a trivial bundle. Then $H$ is essentially finite and the map $\lambda_{H,z} : \pi_1 (Z,z) \to \GL (H_z)$ in \eqref{eq:5} has the following description. Choose a point $y \in Y (k)$ with $\pi (y) = z$. Then for every $\gamma \in \pi_1 (Z,z)$ there is a commutative diagram:
\begin{equation}
  \label{eq:6n}
  \xymatrix{
(\pi^* H)_y \ar@{=}[d] & \Gamma (Y , \pi^* H) \ar[l]_{\overset{y^*}{\sim}} \ar[r]^{\overset{(\gamma y)^*}{\sim}} & (\pi^* H)_{\gamma y} \ar@{=}[d] \\
H_z \ar[rr]^{\lambda_{H, z} (\gamma)} && H_z 
}
\end{equation}
\end{prop}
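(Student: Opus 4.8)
The plan is to reduce first to the case of a Galois covering, and then to identify Nori's representation on the fibre with the classical monodromy representation attached to the associated descent datum.

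\textbf{Essential finiteness and reduction to the Galois case.} First I would replace $\pi$ by its Galois closure $\tilde\pi : \tilde{Y} \to Z$, a connected finite \'etale Galois covering and hence a torsor under the finite constant group scheme $G = \Aut(\tilde{Y}/Z)$ over $k$. Since $\tilde\pi$ factors through $\pi$ and $\pi^* H$ is trivial, $\tilde\pi^* H$ is trivial, so $H$ is trivialized by a torsor under a finite group scheme and is therefore essentially finite. To see that the right-hand side of \eqref{eq:6n} is unchanged when $(Y,y,\pi)$ is replaced by $(\tilde{Y}, \tilde{y}, \tilde\pi)$, I would choose $\tilde{y} \in \tilde{Y}(k)$ above $y$ and write $q : \tilde{Y} \to Y$ for the projection. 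As $\tilde{Y}$ is connected and complete and $\pi^* H$ is trivial, pullback $q^* : \Gamma(Y, \pi^* H) \iso \Gamma(\tilde{Y}, \tilde\pi^* H)$ is an isomorphism, and functoriality of the fibre functor gives $q(\gamma \tilde{y}) = \gamma y$ for all $\gamma$. Hence $y^* = \tilde{y}^* \circ q^*$ and $(\gamma y)^* = (\gamma \tilde{y})^* \circ q^*$, so $(\gamma y)^* \circ (y^*)^{-1} = (\gamma \tilde{y})^* \circ (\tilde{y}^*)^{-1}$. I may therefore assume that $\pi$ itself is a $G$-torsor with $G$ finite and constant.

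\textbf{Descent and the key fibre identity.} Descent along the $G$-torsor $\pi$ identifies the bundles on $Z$ with $\pi^* H$ trivial with finite-dimensional $k$-representations of $G$; explicitly $H$ corresponds to $V := \Gamma(Y, \pi^* H)$ with its canonical $G$-action $\rho$, and $y^* : V \iso (\pi^* H)_y = H_z$. The crucial point is that, because $\pi \circ \sigma = \pi$ for $\sigma \in G$, the canonical equivariant structure on $\pi^* H$ acts as the identity on each fibre under the identifications $(\pi^* H)_p = H_{\pi(p)} = H_{\pi(\sigma p)} = (\pi^* H)_{\sigma p}$. Evaluating sections at $y$ then yields $y^*(\rho(\sigma) s) = s(\sigma^{-1} y) = (\sigma^{-1} y)^*(s)$, i.e.\ transported to $H_z$ via $y^*$ the operator $\rho(\sigma)$ becomes $(\sigma^{-1} y)^* \circ (y^*)^{-1}$.

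\textbf{Comparison with $\lambda_{H,z}$.} An element $\gamma \in \pi_1(Z,z) = \Aut(F_z)$ induces a bijection $\gamma_Y$ of the fibre $F_z(Y) = \pi^{-1}(z)$ which, by naturality, commutes with the deck action of $G$; since $G$ acts simply transitively on $\pi^{-1}(z)$ this forces $\gamma y = g_\gamma\, y$ for a unique $g_\gamma \in G$, and $m : \gamma \mapsto g_\gamma^{-1}$ is a homomorphism $\pi_1(Z,z) \to G$. By Nori's identification of the \'etale part of $\pi(Z,z)$ with Grothendieck's fundamental group, $\lambda_{H,z}$ is the composite of $m$ with the descent representation $\rho$, transported to $H_z$ via $y^*$. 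Combining this with the previous paragraph gives
\[
\lambda_{H,z}(\gamma) \;=\; (m(\gamma)^{-1} y)^* \circ (y^*)^{-1} \;=\; (g_\gamma\, y)^* \circ (y^*)^{-1} \;=\; (\gamma y)^* \circ (y^*)^{-1},
\]
which is exactly the commutativity of \eqref{eq:6n}.

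\textbf{Main obstacle.} The genuinely non-formal input is the comparison invoked in the third step: that on the subcategory of essentially finite bundles trivialized by finite \'etale coverings, Nori's Tannakian representation $\blambda_{H,z}$ — defined through automorphisms of the fibre functor $H \mapsto H_z$ — restricts to the classical representation $\rho \circ m$ obtained from Galois descent and monodromy. Everything else is bookkeeping, but this bookkeeping must be done with care: the direction of the deck action, the direction of the monodromy map, and the appearance of $\sigma^{-1}$ in the action on sections all have to be matched so that the composite comes out as $(\gamma y)^* \circ (y^*)^{-1}$ rather than its inverse.
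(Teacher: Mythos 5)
Your proposal is correct and follows essentially the same route as the paper's proof: reduce to the Galois closure, identify $\lambda_{H,z}$ as the composite $\pi_1(Z,z)\to G\to \GL(\Gamma(Y,\pi^*H))\xrightarrow{\sim}\GL(H_z)$ via the deck action on sections, and conclude by the fibre computation $(\gamma y)^*\verk (y^*)^{-1}$. The only differences are notational (left versus right $G$-action, hence the $\sigma^{-1}$'s) and that you spell out the reduction step and the descent identification slightly more explicitly than the paper, which simply asserts that $\lambda_{H,z}$ is this composite ``from the definitions.''
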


\begin{proof}
  The covering $\pi : Y \to Z$ can be dominated by a finite \'etale Galois covering $\pi' : Y' \to Z$. Let $y' \in Y' (k)$ be a point above $y$. If the diagram \eqref{eq:6n} with $\pi , Y , y$ replaced by $\pi' , Y' , y'$ is commutative, then \eqref{eq:6n} itself commutes. Hence we may assume that $\pi : Y \to Z$ is Galois with group $G$. In particular $H$ is essentially finite. Consider the surjective homomorphism $\pi_1 (Z,z) \to G$ mapping $\gamma$ to the unique $\sigma \in G$ with $\gamma y = y^{\sigma}$. The right action of $G$ on $Y$ induces a left action on $\Gamma (Y , \pi^* H)$ by pullback and it follows from the definitions that $\lambda_{H,z}$ is the composition
\[
\lambda_{H,z} : \pi_1 (Z,z) \to G \to \GL (\Gamma (Y , \pi^* H)) \xrightarrow{\overset{\mathrm{via}\,y^*}{\sim}} \GL (H_x) \; .
\]
Now the equations
\[
(\gamma y)^* \verk (y^*)^{-1} = (y^{\sigma})^* \verk (y^*)^{-1} = (\sigma \verk y)^* \verk (y^*)^{-1} = y^* \verk \sigma^* \verk (y^*)^{-1} = \lambda_H (k) (\gamma)
\]
imply the assertion. Here $\sigma^*$ is the automorphism of $\Gamma (Y , \pi^* H)$ induced by $\sigma$. 
\end{proof}

The following class of vector bundles contains the essentially finite ones. A vector bundle $H$ on a reduced connected and complete $k$-scheme $Z$ is called strongly semistable of degree zero $(sss)$ if for all $k$-morphisms $f : C \to Z$ from a smooth connected projective curve $C$ over $k$ the pullback bundle $f^* (H)$ is semistable of degree zero, c.f. \cite{DM} (2.34). It follows from \cite{N} Lemma (3.6) that the $sss$-bundles form an abelian category. Moreover a result of Gieseker shows that it is a tensor category, c.f. \cite{Gi}. If $Z$ is purely one-dimensional, a bundle $H$ is $sss$ if and only if the pullback of $H$ to the normalization $\tC_i$ of each irreducible component $C_i$ of $Z$ is strongly semistable of degree zero in the usual sense on the smooth projective curve $\tC_i$ over $k$, see e.g. \cite{DW3} Proposition 4. 

Generalizing results of Lange--Stuhler and Subramanian slightly we have the following fact, where $\F_q$ denotes the field with $q = p^r$ elements. 

\begin{theorem}
  \label{t2}
Let $Z$ be a reduced complete and connected purely one-dimensional scheme over $\F_q$. Then the following three conditions are equivalent for a vector bundle $H$ on $Z$. \\
{\bf 1} $H$ is strongly semistable of degree zero.\\
{\bf 2} There is a finite surjective morphism $\varphi : Y \to Z$ with $Y$ a complete and purely one-dimensional scheme over $\F_q$ such that $\varphi^* H$ is a trivial bundle.\\
{\bf 3} There are a finite \'etale covering $\pi : Y \to Z$ and some $s \ge 0$ such that for the composition $\varphi : Y \xrightarrow{F^s} Y \xrightarrow{\pi} Z$ the pullback $\varphi^* H$ is a trivial bundle. Here $F = \Fr_q = \Fr^r_p$ is the $q$-linear Frobenius morphism on $Y$.

If $Z$ has an $\F_q$-rational point, these conditions are equivalent to

{\bf 4} $H$ is essentially finite.

\end{theorem}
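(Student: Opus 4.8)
The plan is to prove the chain $3 \Rightarrow 2 \Rightarrow 1 \Rightarrow 3$ and then treat condition $4$ separately. The two implications $3 \Rightarrow 2$ and $2 \Rightarrow 1$ are the formal ones. For $3 \Rightarrow 2$ one notes that $\varphi = \pi \verk F^s$ is a composition of finite surjective morphisms, hence itself finite and surjective, that $Y$ is again complete and purely one-dimensional (being finite over $Z$), and that $\varphi^* H$ is trivial by hypothesis. For $2 \Rightarrow 1$ I would check the defining property of $sss$ directly: given any $f : C \to Z$ from a smooth projective curve, I form the normalization $C'$ of an irreducible component of $C \times_Z Y$ dominating $C$, obtaining finite surjective morphisms $g : C' \to C$ and $C' \to Y$; then $g^* f^* H = (C' \to Y)^* \varphi^* H$ is trivial. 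Since degree scales by $\deg g$ and a destabilizing subsheaf of $f^* H$ would pull back to one of $g^* f^* H$, both semistability and vanishing of the degree descend along the finite surjective map $g$ of smooth curves, giving that $f^* H$ is semistable of degree zero. Hence $H$ is $sss$.

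The implication $1 \Rightarrow 3$ is the substance of the theorem and is where the results of Lange--Stuhler enter, via Frobenius periodicity. Writing $F = \Fr_q$ for the $q$-linear Frobenius on $Z$, I would first observe that each iterate $(F^n)^* H$ is again $sss$ of degree zero: by the normalization criterion recalled above this can be checked on each $\tC_i$, where it is classical that Frobenius pullback preserves strong semistability and multiplies the (vanishing) degree by $q$. The key point is then a finiteness statement: up to isomorphism there are only finitely many $sss$ bundles of rank $r = \rk H$ and degree zero on $Z$ defined over $\F_q$. On a smooth projective curve this is the Lange--Stuhler finiteness (the relevant moduli scheme is of finite type over $\F_q$, so has finitely many $\F_q$-points, each carrying only finitely many $\F_q$-forms by the finiteness of Galois cohomology over a finite field); on the reducible, possibly singular $Z$ one reduces to this by describing a bundle through its restrictions to the $\tC_i$ together with gluing data along the finitely many $\F_q$-points of the singular locus, the gluings forming a finite-type $\F_q$-scheme with finitely many $\F_q$-points. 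This is the ``slight generalization'' alluded to before the statement. Granting finiteness, the sequence $\bigl((F^n)^* H\bigr)_{n \ge 0}$ must repeat, say $(F^b)^* H \cong (F^a)^* H$ with $a < b$; setting $G = (F^a)^* H$ and $s = b - a$ yields a Frobenius structure $(F^s)^* G \iso G$. By the Lange--Stuhler equivalence between Frobenius-periodic bundles and $\F_{q^s}$-local systems, $G$ corresponds to a representation of $\pi_1 (Z,z)$ into the finite group $\mathrm{GL}_r (\F_{q^s})$, and the finite \'etale Galois covering $\pi : Y \to Z$ killing its monodromy trivializes $G$, i.e.\ $\pi^* G \cong \Oh_Y^{\,r}$. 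Finally, since the absolute Frobenius commutes with every morphism, $F^a \verk \pi = \pi \verk F^a$, whence $(F^a)^* \pi^* H = \pi^* (F^a)^* H = \pi^* G$ is trivial; thus $\varphi = \pi \verk F^a$ is exactly a morphism of the shape required in $3$.

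For the last part I would show $1 \Leftrightarrow 4$ under the hypothesis $Z (\F_q) \ne \emptyset$. The implication $4 \Rightarrow 1$ is contained in the observation, recalled before the statement, that essentially finite bundles are $sss$. For $1 \Rightarrow 4$ I would feed condition $3$ into Nori's characterization: an $\F_q$-rational point $z$ being available, it suffices to exhibit $H$ as trivialized by a torsor under a finite group scheme. The Galois \'etale covering $\pi$ provides an \'etale torsor under the finite constant group $\Gal (Y/Z)$, while the $s$-fold Frobenius $Y \xrightarrow{F^s} Y$ contributes an infinitesimal part through the Frobenius kernel; assembling these realizes $\varphi = \pi \verk F^s$, after passing to a torsor refinement, as a torsor under a finite group scheme over $\F_q$ on which $\varphi^* H$ is trivial, so that $H$ is essentially finite.

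I expect the main obstacle to be the finiteness input in $1 \Rightarrow 3$: the passage from the classical Lange--Stuhler setting on a smooth curve to a reducible and non-normal purely one-dimensional $Z$ requires a careful boundedness argument and control of the gluing data over the singular locus, and it is this step---rather than the formal Frobenius bookkeeping---that makes Frobenius periodicity, and with it the whole implication, go through.
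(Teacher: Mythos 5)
Your chain $\textbf{3}\Rightarrow\textbf{2}\Rightarrow\textbf{1}$ is fine, and your argument for $\textbf{1}\Rightarrow\textbf{3}$ (Frobenius periodicity from finiteness of isomorphism classes of $sss$-bundles of fixed rank, then the Lange--Stuhler trivialization of the Frobenius-periodic bundle $G=(F^a)^*H$ by a finite \'etale covering) is exactly the route of the result the paper cites here, namely \cite{DW2} Theorem 18; the reduction of the finiteness statement to the normalizations $\tC_i$ plus gluing data is the ``slight generalization'' in question, and while you only sketch it, that matches the level of detail the paper itself gives. Likewise $\textbf{4}\Rightarrow\textbf{1}$ (or $\textbf{4}\Rightarrow\textbf{2}$) is immediate, as you say.

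The genuine gap is in $\textbf{1}\Rightarrow\textbf{4}$. You propose to realize $\varphi=\pi\verk F^s$, ``after passing to a torsor refinement,'' as a torsor under a finite group scheme, with the Frobenius contributing an ``infinitesimal part through the Frobenius kernel.'' This does not work: for a general curve $Y$ the morphism $F^s:Y\to Y$ is finite and purely inseparable but is \emph{not} a torsor under any group scheme (there is no group acting on $Y$ over $Y$ with $Y\times_{F^s,Y}Y\cong Y\times G$), and composing a non-torsor with a Galois torsor does not become a torsor after any evident refinement. The whole content of Subramanian's theorem is precisely to produce the finite group scheme, and the mechanism is Tannakian, not geometric: one takes the neutral Tannakian category of $sss$-bundles with fibre functor at $z\in Z(\F_q)$, forms the monodromy group scheme $M_H$ (the Tannaka dual of the tensor subcategory generated by $H$), observes that the $\GL_n$-frame bundle of $H$ admits a reduction of structure group to $M_H$, which yields an $M_H$-torsor $\alpha:P\to Z$ with $\alpha^*H$ trivial, and then uses the Frobenius periodicity $\Fr_q^{s*}H\cong\Fr_q^{t*}H$ to force $M_H$ to be a \emph{finite} group scheme. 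You need some version of this (or another proof that $M_H$ is finite); the torsor cannot be read off from $\pi\verk F^s$ directly.
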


\begin{rem}
  If $Z (\F_q) \neq \emptyset$, then according to {\bf 4} the trivializing morphism $\varphi : Y \to Z$ in {\bf 2} can be chosen to be a $G$-torsor under a finite group scheme $G / \F_q$.
\end{rem}

\begin{proof}
  The equivalence of {\bf 1} to {\bf 3} is shown in \cite{DW2} Theorem 18 by slightly generalizing a result of Lange and Stuhler. It is clear that {\bf 4} implies {\bf 2}. Over a smooth projective curve $Z / \F_q$ the equivalence of {\bf 1} and {\bf 4} was shown by Subramanian in \cite{S}, Theorem (3.2) with ideas from \cite{MS} and \cite{BPS}. His proof works also over our more general bases $Z$ and shows that {\bf 1} implies {\bf 4}. Roughly the argument goes as follows: Using the fibre functor in a point $z \in Z (\F_q)$ the abelian tensor category $\Th_Z$ of $sss$-bundles on $Z$ becomes a neutral Tannakian category over $\F_q$. Note by the way that the characterization {\bf 2} of $sss$-bundles shows without appealing to \cite{Gi} that $\Th_Z$ is stable under the tensor product. Consider the Tannakian subcategory generated by $H$. Its Tannakian dual is called the monodromy group scheme $M_H$ in \cite{BPS}. Let $n$ be the rank of $H$. The $\GL_n$-torsor associated to $H$ allows a reduction of structure group to $M_H$. Hence we obtain an $M_H$-torsor $\alpha : P \to Z$ such that $\alpha^* H$ is a trivial bundle. We have $\Fr^{s*}_q H = \Fr^{t*}_q H$ for some $s > t \ge 0$ because there are only finitely many isomorphism classes of semistable vector bundles of degree zero on a smooth projective curve over a finite field. See \cite{DW2} Proof of Theorem 18 for more details. A short argument as in \cite{S} now implies that $M_H$ is a finite group scheme and we are done.
\end{proof}

Later on we will need the following fact:

\begin{prop}
  \label{t3}
Let $S_0$ be a scheme over $\F_q$ and let $F = \Fr_q$ be the $q$-linear Frobenius morphism on $S_0$. Set $k = \overline{\F}_q$ and let $\overline{F}= F \otimes_{\F_q}$ k be the base extension of $F$ to a morphism of $S = S_0 \otimes_{\F_q} k$. Then for any geometric point $s \in S (\Omega)$ the induced map $\overline{F}_* : \pi_1 (S , s) \to \pi_1 (S , \overline{F} (s))$ is an isomorphism.
\end{prop}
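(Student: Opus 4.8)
The plan is to compare $\overline{F}$ with the \emph{absolute} $q$-power Frobenius of $S$ and to exploit the invariance of the étale site under Frobenius. Write $\Fr_S$ for the absolute $q$-linear Frobenius endomorphism of $S = S_0 \otimes_{\F_q} k$, i.e.\ the morphism which is the identity on the underlying space and whose comorphism is the $q$-th power map on $\Oh_S$; on local sections of the form $a \otimes c$ (with $a$ a section of $\Oh_{S_0}$ and $c \in k$) it acts by $a \otimes c \mapsto a^q \otimes c^q$. By contrast $\overline{F} = F \otimes_{\F_q} k$ acts by $a \otimes c \mapsto a^q \otimes c$. Hence, letting $\tau = \mathrm{id}_{S_0} \otimes \sigma$ be the automorphism of $S$ obtained by base change along the arithmetic Frobenius $\sigma \in \Gal (k / \F_q)$, $c \mapsto c^q$, one obtains the factorization
\[
\Fr_S = \tau \verk \overline{F} = \overline{F} \verk \tau \; .
\]

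First I would record the two ingredients separately. The absolute Frobenius $\Fr_S$ is a universal homeomorphism, so by topological invariance of the small étale site it induces an equivalence $\Fr_S^* : S_{\et} \iso S_{\et}$; equivalently, for every finite étale $\pi : Y \to S$ the relative Frobenius $Y \to \Fr_S^* Y$ is an isomorphism, so that $\Fr_S^*$ is isomorphic to the identity functor on finite étale coverings. In either formulation, $(\Fr_S)_* : \pi_1 (S,s) \to \pi_1 (S , \Fr_S (s))$ is an isomorphism for every geometric point $s$. Next, $\tau$ is an automorphism of $S$ as a scheme; it is only $\sigma$-semilinear over $k$, but this is irrelevant since $\pi_1$ of the étale site depends only on the underlying scheme. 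Hence $\tau_*$ is an isomorphism on $\pi_1$ for every choice of base point.

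To conclude, I would feed a geometric point $s \in S(\Omega)$ into the factorization $\Fr_S = \tau \verk \overline{F}$ and apply functoriality of $\pi_1$, obtaining
\[
\pi_1 (S,s) \xrightarrow{\overline{F}_*} \pi_1 (S , \overline{F}(s)) \xrightarrow{\tau_*} \pi_1 (S , \Fr_S (s)) \, , \qquad \tau_* \verk \overline{F}_* = (\Fr_S)_* \; .
\]
Since $(\Fr_S)_*$ and $\tau_*$ are isomorphisms, so is $\overline{F}_* : \pi_1 (S,s) \to \pi_1 (S , \overline{F}(s))$, which is exactly the assertion. The only genuinely delicate point is the clean separation of the three Frobenius morphisms in play — the absolute Frobenius $\Fr_S$, the geometric automorphism $\tau$, and $\overline{F}$ itself — together with the observation that $\tau$, although not a $k$-morphism, is a bona fide scheme automorphism and therefore does act on $\pi_1$; once the factorization is in place, everything else is formal.
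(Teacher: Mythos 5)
Your proof is correct and follows essentially the same route as the paper: your automorphism $\tau = \mathrm{id}_{S_0} \otimes \sigma$ is exactly the paper's $\psi = \id_{S_0} \otimes F_k$, the factorization $\Fr_S = \tau \verk \overline{F}$ is the paper's identification of $\psi \verk \overline{F}$ with the absolute $q$-linear Frobenius of $S$, and both arguments conclude via the fact that the relative Frobenius of a finite \'etale covering is an isomorphism (equivalently, topological invariance of the \'etale site). No gaps.
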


\begin{proof}
  Let $F_k$ be the automorphism of $k$ with $F_k (x) = x^q$ for all $x \in k$. Then $\psi = \id_{S_0} \otimes F_k$ is an automorphism of the scheme $S$ and hence it induces isomorphisms on fundamental groups. It suffices therefore to show that
\[
(\psi \verk \overline{F})_* : \pi_1 (S , s) \to \pi_1 (S , \psi (\overline{F} (s)))
\]
is an isomorphism. The morphism $\psi \verk \overline{F}$ is the $q$-linear Frobenius morphism $\Fr_q$ on $S$. For any finite \'etale covering $\pi : T \to S$ the relative Frobenius morphism is known to be an isomorphism and hence the commutative diagram
\[
\xymatrix{
T \ar[r]^{\Fr_q} \ar[d]_{\pi} & T \ar[d]^{\pi} \\
S \ar[r]^{\Fr_q} & S
}
\]
is {\it cartesian}. It follows that $\Fr_{q*} = (\psi \verk \overline{F})_*$ is an isomorphism on fundamental groups.
\end{proof}

\section{Proof of  theorem \ref{t1}}
\label{sec:3}

For the proof of theorem \ref{t1} we first give a description of the representation $\rho_{\Eh , x_{\eo}} \otimes k$ which follows immediately from the construction of $\rho_{\Eh , x_{\eo}}$ in \cite{DW2} section 3. 

We assume that we are in the situation of theorem \ref{t1}. By assumption $\Eh^{\red}_k$ is strongly semistable of degree zero on $\eX^{\red}_k$. According to \cite{DW2} theorem 17 there is a proper morphism $\pi : \Zh \to \eX$ with the following properties:\\
{\bf a} The generic fibre $Z = \Zh \otimes_{\oZ_p} \oQ_p$ is a smooth projective connected $\oQ_p$-curve.\\
{\bf b} The induced morphism $\pi : Z \to X$ is finite and for an open dense subscheme $U \subset X$ the restriction $\pi : \pi^{-1} (U) = W \to U$ is \'etale. Moreover we have $x \in U (\C_p)$ for the chosen base point $x \in X (\C_p)$.\\
{\bf c} The scheme $\Zh$ is a model of $Z$ over $\oZ_p$ whose special fibre $\Zh_k$ is reduced. In particular $\Zh / \oZ_p$ is cohomologically flat in degree zero.\\
{\bf d} The pullback $\pi^*_k \Eh_k$ is a trivial vector bundle on $\Zh_k$.

The following construction gives a representation of $\pi_1 (U,x)$ on $\Eh_{x_k}$. For $\gamma \in \pi_1 (U,x) = \Aut (F_x)$ choose a point $z \in W (\C_p)$ with $\pi (z) = x$. Then $\gamma z \in W (\C_p)$ is another point over $x$. From $z$ and $\gamma z$ in $W (\C_p) \subset Z (\C_p)$ we obtain points $z_k$ and $(\gamma z)_k$ in $\Zh_k (k)$ as in the introduction. Consider the diagram
\begin{equation} \label{eq:6}
\Eh_{x_k} = (\pi^*_k \Eh_k)_{z_k} \xleftarrow{\overset{z^*_k}{\sim}} \Gamma (\Zh_k , \pi^*_k \Eh_k) \xrightarrow{\overset{(\gamma z)^*_k}{\sim}} (\pi^*_k \Eh_k)_{(\gamma z)_k} = \Eh_{x_k} \; .
\end{equation}
Here the pullback morphisms along $z_k : \spec k \to \Zh_k$ and $(\gamma z)_k : \spec k \to \Zh_k$ are isomorphisms because $\pi^*_k \Eh_k$ is a trivial bundle and $\Zh / \oZ_p$ is cohomologically flat in degree zero.

It turns out that the map
\begin{equation} \label{eq:7}
\rho : \pi_1 (U,x) \to \GL (\Eh_{x_k}) \quad \mbox{defined by} \; \rho (\gamma) = (\gamma z)^*_k \verk (z^*_k)^{-1}
\end{equation}
is a homomorphism of groups which (by construction) factors over a finite quotient of $\pi_1 (U,x)$. Thus $\rho$ is continuous if $\GL (\Eh_{x_k})$ is given the discrete topology. Moreover $\rho$ does not depend on either the choice of the point $z$ above $x$ nor on the choice of morphism $\pi : \Zh \to \eX$ satisfying {\bf a}--{\bf d}. It follows from \cite{DW2} Theorem 17 and Proposition 35 that $\rho$ factors over $\pi_1 (X,x)$. The resulting representation $\rho : \pi_1 (X,x) \to \GL (\Eh_{x_k})$ agrees with $\rho_{\Eh , x_{\eo}} \otimes k$. 

In order to prove theorem \ref{t1} we will now construct given $\Eh_k$ a suitable morphism $\Zh \to \eX$. We use a modification of the method from the proof of theorem 17 in \cite{DW2}. In that proof the singularities were resolved at the level of $\Yh$ which is too late for our present purposes because it creates an extension of $\Yh_k$ which is hard to control discussing the Nori fundamental group. Instead, we will resolve the singularities of a model of $X$. Then $\Yh$ does not have to be changed later. We proceed with the details:

Choose a finite extension $K / \Q_p$ with ring of integers $\eo_K$ and residue field $\kappa$ such that $(\eX , \Eh_k , x_k)$ descends to $(\eX_{\eo_K} , \Eh_0 , x_0)$. Here $\eX_{\eo_K}$ is a proper and flat $\eo_K$-scheme with $\eX_{\eo_K} \otimes_{\eo_K} \oZ_p = \eX$ and $\Eh_0$ a vector bundle on $\eX_0 = \eX_{\eo_K} \otimes \kappa$ with $\Eh_0 \otimes_{\kappa} k = \Eh_k$. Since $\Eh^{\red}_k$ is an $sss$-bundle on $\eX^{\red}_k$ the restriction $\Eh^{\red}_0$ of $\Eh_0$ to $\eX^{\red}_0$ is an $sss$-bundle as well. Finally $x_0 \in \eX_0 (\kappa)$ is a point which induces $x_k$ after base change to $k$. Theorem \ref{t2} implies that $\Eh^{\red}_0$ is essentially finite and hence $\Eh^{\red}_k$ is essentially finite as well.

After replacing $K$ by a finite extension and performing a base extension to the new $K$ we can find a semistable model $\eX'_{\eo_K}$ of the smooth projective curve $X_K = \eX_{\eo_K} \otimes K$ together with a morphism $\alpha_{\eo_K} : \eX'_{\eo_K} \to \eX_{\eo_K}$ extending the identity on the generic fibre $X_K$. This is possible by the semistable reduction theorem, c.f. \cite{A} for a comprehensive account. By Lipman's desingularization theorem we may assume that $\eX'_{\eo_K}$ besides being semistable is also regular, c.f. \cite{Lip} 10.3.25 and 10.3.26. The irreducible regular surface $\eX'_{\eo_K}$ is proper and flat over $\eo_K$.

Let $\Eh'_0$ be the pullback of $\Eh_0$ along the morphism $\alpha_0 : \eX'_0 = \eX' \otimes \kappa \to \eX_0$. Since $\eX'_0$ is reduced the map factors as $\alpha_0 : \eX'_0 \to \eX^{\red}_0 \subset \eX_0$ and $\Eh'_0$ is also the pullback of the $sss$-bundle $\Eh^{\red}_0$. Hence $\Eh'_0$ is an $sss$-bundle as well.

Using theorem \ref{t2} we find a finite \'etale covering $\pi_0 : \Yh_0 \to \eX'_0$ by a complete and one-dimensional $\kappa$-scheme $\Yh_0$ and an integer $s \ge 0$ such that under the composed map $\varphi : \Yh_0 \xrightarrow{F^s} \Yh_0 \xrightarrow{\pi_0} \eX'_0$ the pullback $\varphi^* \Eh'_0$ is a trivial bundle. Here $F = \Fr_q$ is the $q = |\kappa|$-linear Frobenius morphism on $\Yh_0$. Let $\tk$ be a finite extension of $\kappa$ such that all connected components of $\Yh_0 \otimes_{\kappa} \tk$ are geometrically connected. Let $\tK / K$ be the unramified extension with residue field $\tk$. We replace $\eX_{\eo_K}, \eX'_{\eo_K}$ and $\Eh_0 , \Eh'_0$ by their base extensions with $\eo_{\tK}$ resp. $\tk$ and $F$ by the $|\tk|$-linear Frobenius morphism. We also replace $\Yh_0$ be a connected component of $\Yh_0 \otimes_{\kappa} \tilde{\kappa}$ and $\pi_0$ by the induced morphism. Then the new $\eX_{\eo_K} , \eX'_{\eo_K} , \varphi , \ldots$ keep the previous properties and $\Yh_0$ is now geometrically connected. Using \cite{SGA1} IX Th\'eor\`eme 1.10 we may lift $\pi_0 : \Yh_0 \to \eX'_0$ to a finite \'etale morphism $\pi_{\eo_K} : \Yh_{\eo_K} \to \eX'_{\eo_K}$. The proper flat $\eo_K$-scheme $\Yh_{\eo_K}$ is regular with geometrically reduced fibres over $\eo_K$ because $\eX'_{\eo_K}$ has these properties. In particular, the morphism $\Yh_{\eo_K} \to \spec \eo_K$ is cohomologically flat in degree zero. Since the special fibre $\Yh_0$ is geometrically connected and reduced it follows that the generic fibre $Y_K$ of $\Yh_{\eo_K}$ is geometrically connected and hence a smooth projective geometrically irreducible curve over $K$. In particular $\Yh_{\eo_K}$ is irreducible in addition to being regular and proper flat over $\eo_K$. By a theorem of Lichtenbaum \cite{Li} there is thus a closed immersion $\tau_K : \Yh_{\eo_K} \hookrightarrow \Pa^N_{\eo_K}$ for some $N$. Composing with a suitable automorphism of $\Pa^N_{\eo_K}$ we may assume that $\tau^{-1}_K (\G^N_{m,K}) \subset Y_K$ contains all points in $Y_K (\C_p)$ over $x \in X_K (\C_p) = X (\C_p)$. In particular, $\tau^{-1}_K (\G^N_{ m,K})$ is open and dense in $Y_K$ with a finite complement. Thus there is an open subscheme $U_K \subset X_K$ with $x \in U_K (\C_p)$ and such that $V_K = \pi^{-1}_K (U_K)$ is contained in $\tau^{-1}_K (\G^N_{m,K})$. 

Consider the finite morphism $F_{\eo_K} : \Pa^N_{\eo_K} \to \Pa^N_{\eo_K}$ given on $A$-valued points where $A$ is any $\eo_K$-algebra, by sending $[x_0 : \ldots : x_N]$ to $[x^q_0 : \ldots : x^q_N]$. The reduction of $F_{\eo_K}$ is the $q$-linear Frobenius morphism on $\Pa^N_{\kappa}$. 

Let $\rho_{\eo_K} : \Yh'_{\eo_K} \to \Yh_{\eo_K}$ be the base change of $F^s_{\eo_K}$ via $\tau_K$. It is finite and its generic fibre $\rho_K : Y'_K \to Y_K$ is \'etale over $V_K$. Now we look at the reductions and we define a morphism $i : \Yh_0 \to \Yh'_0$ over $\kappa$ by the commutative diagram
\[
\xymatrix{
\Yh_0 \ar[dr]^i \ar@/_/[ddr]_{\tau_0} \ar@/^/[drr]^{F^s} & & \\
 & \Yh'_0 \ar[r]^{\rho_0} \ar@{^{(}->}[d] & \Yh_0 \ar@{^{(}->}[d]^{\tau_0} \\
 & \Pa^N_{\kappa} \ar[r]^{F^s} & \Pa^N_{\kappa}
}
\]
In \cite{DW2} Lemma 19 it is shown that $i$ induces an isomorphism $i : \Yh_0 \xrightarrow{\sim} \Yh^{'\red}_0$. Here the index $0$ always refers to the special fibre over $\spec \kappa$.

Taking the normalization of $\Yh'_{\eo_K}$ in the function field of an irreducible component of $Y'_K$ we get a proper, flat $\eo_K$-scheme $\Yh''_{\eo_K}$ which is finite over $\Yh'_{\eo_K}$. Its generic fibre $Y''_K$ is a smooth projective connected curve over $K$ (maybe not geometrically connected). The following diagram summarizes the situation
\[
\xymatrix{
 & & \Yh''_0 \ar[r] \ar[d] & \Yh''_{\eo_K} \ar[d] & Y''_K \ar[l] \ar[d] \\
\Yh_0 \ar[r]^{\overset{i}{\sim}} \ar[d]_{F^s} & \Yh^{'\red}_0 \ar[r] & \Yh'_0 \ar[r] & \Yh'_{\eo_K} \ar[d]_{\rho_{\eo_K}} & Y'_K \ar[l] \ar[d] \\
\Yh_0 \ar[rrr] \ar[d]_{\pi_0} & & & \Yh_{\eo_K} \ar[d]_{\pi_{\eo_K}} & Y_K \ar[l] \ar[d] \ar@{}[r]|\supset  & V_K \ar[d] \\
\eX'_0 \ar[rrr] \ar[d]_{\alpha_0} & & & \eX'_{\eo_K} \ar[d]_{\alpha_{\eo_K}} & X_K \ar[l] \ar@{=}[d]  \ar@{}[r]|\supset & U_K \\
\eX^{\red}_0 \ar[rr] & & \eX_0 \ar[r] & \eX_{\eo_K} & X_K \ar[l] 
}
\]
For a suitable finite extension $\tK / K$ all connected components of $Y''_K \otimes_K \tK$ will be geometrically connected. Let $Y'''_{\tK}$ be one of them and let $\Yh'''_{\eo_{\tK}}$ be its closure with the reduced scheme structure in $\Yh''_{\eo_K} \otimes \eo_{\tK}$. By the semistable reduction theorem there are a finite extension $L / \tK$ and a semistable model $\Zh_{\eo_L}$ of $Y'''_{\tK} \otimes_{\tK} L$ over $\Yh'''_{\eo_{\tK}}$. Base extending $\eX_{\eo_K} , \ldots , \Yh''_{\eo_K}$ over $\eo_K$ to $\oZ_p$ and $\Yh'''_{\eo_{\tilde{K}}}$ over $\eo_{\tilde{K}}$ and $\Zh_{\eo_L}$ over $\eo_L$ we get a commutative diagram, where $\delta$ is the composition $\delta : \Zh \to \Yh''' \to \Yh'' \to \Yh' \xrightarrow{\rho} \Yh$,
\begin{equation} \label{eq:8}
\xymatrix{
\Zh_k \ar[rr] \ar[d]_{\beta_k} & & \Zh \ar[dd]_{\delta} & Z \ar[l] \ar[dd]^{\delta_{\oQ_p}} \\
\Yh_k \ar[d]_{F^s \otimes_{\kappa} k} & & & \\
\Yh_k \ar[rr] \ar[d]_{\pi_k}  & & \Yh \ar[d]_{\pi} & Y \ar[l] \ar[d]^{\pi_{\oQ_p}} \ar@{}[r]|\supset & V \ar[d] \\
\eX'_k \ar[rr] \ar[d]_{\alpha_k} & & \eX' \ar[d]_{\alpha} & X \ar[l] \ar@{=}[d]  \ar@{}[r]|\supset & U \\
\eX^{\red}_k \ar@{^{(}->}[r] & \eX_k \ar[r] & \eX & X \ar[l]
}
\end{equation}
Here the morphism $\beta_k : \Zh_k \to \Yh_k$ comes about as follows: Since $\Zh_k$ is reduced, the composition $\Zh_k \to \Yh'''_k \to \Yh''_k \to \Yh'_k$ factors over $\Yh^{'\red}_k \xleftarrow{\sim} \Yh_k$ and this defines $\beta_k$. By construction, the map $\pi_{\oQ_p} \verk \delta_{\oQ_p} : Z \to X$ is finite and such that its restriction to a map $W = (\pi_{\oQ_p} \verk \delta_{\oQ_p})^{-1} (U) \to U$ is finite and \'etale. By construction the bundle $\Eh'_k = \alpha^*_k \Eh_k = \Eh'_0 \otimes_{\kappa} k$ is trivialized by pullback along $\pi_k \verk (F^s \otimes_{\kappa} k)$ and hence also along $(\pi \verk \delta)_k = \pi_k \verk (F^s \otimes_{\kappa} k) \verk \beta_k$. For later purposes note that we have a commutative diagram
\begin{equation}
  \label{eq:9}
  \xymatrix{
\Yh_k \ar[r]^{F^s \otimes_{\kappa} k} \ar[d]_{\pi_k} & \Yh_k \ar[d]^{\pi_k} \\
\eX'_k \ar[r]^{F^s \otimes_{\kappa} k} & \eX'_k
}
\end{equation}
obtained by base changing the corresponding diagram over $\kappa$:
\[
\xymatrix{
  \Yh_0 \ar[d]_{\pi_0} \ar[r]^{F^s} & \Yh_0 \ar[d]^{\pi_0} \\
\eX'_0 \ar[r]^{F^s} & \eX'_0 \; .
}
\]
The inclusion $\eX_k \to \eX$ induces a natural isomorphism $\pi_1 (\eX_k , x_k) \iso \pi_1 (\eX , x_k)$. This follows from \cite{SGA1} Exp.~X, Th\'eor\`eme~2.1 together with an argument to reduce the finitely presented case to a Noetherian one as in the proof of \cite{SGA1}, Exp. IX, Th\'eor\`eme 6.1, p. 254 above.

Next we note that there is a canonical isomorphism
\[
\pi_1 (\eX , x_k) = \Aut (F_{x_k}) = \Aut F_x = \pi_1 (\eX , x) \; .
\]
Namely, for a finite \'etale covering $\Yh \to \eX$, by the infinitesimal lifting property, any point $y_k \in \Yh_k (k)$ over $x_k$ determines a unique section $y_{\eo} \in \Yh (\eo)$ over $x_{\eo} \in \eX (\eo)$ and hence a point $y \in Y (\C_p)$ over $x \in X (\C_p)$. In this way one obtains a bijection between the points $y_k$ over $x_k$ and the points $y$ over $x$. Thus the fibre functors $F_{x_k}$ and $F_x$ are canonically isomorphic. 

Finally, by \cite{SGA1}, Exp. IX, Proposition 1.7, the inclusion $\eX^{\red}_k \hookrightarrow \eX_k$ induces an isomorphism $\pi_1 (\eX^{\red}_k , x_k) \iso \pi_1 (\eX_k , x_k)$. Thus we get an isomorphism
\[
\pi_1 (\eX^{\red}_k , x_k) \iso \pi_1 (\eX_k , x_k) = \pi_1 (\eX , x_k) = \pi_1 (\eX , x)
\]
and hence a commutative diagram
\begin{equation}
  \label{eq:10}
  \xymatrix{
 & \pi_1 (\eX' , x) \ar[dd]^{\alpha_*} \ar@{=}[r] & \pi_1 (\eX'_k , x'_k) \ar[dd]^{\alpha_{k*}} \\
\pi_1 (X,x) \ar[ur] \ar[dr] & & \\
& \pi_1 (\eX , x) \ar@{=}[r] & \pi_1 (\eX^{\red}_k , x_k) \; .
}
\end{equation}
For $\ogamma \in \pi_1 (X,x)$ choose an element $\gamma \in \pi_1 (U,x)$ which maps to $\ogamma$ and let $\ogamma_k$ be the image of $\ogamma$ in $\pi_1 (\eX'_k , x'_k)$. Fix a point $z \in W (\C_p)$ which maps to $x \in U (\C_p)$ in diagram \eqref{eq:8}. As explained at the beginning of this section the automorphism $\rho_{\Eh , x_{\eo}} (\ogamma) \otimes k$ of $\Eh_{x_k}$ is given by the formula
\begin{equation}
  \label{eq:11}
  \rho_{\Eh , x_{\eo}} (\ogamma) \otimes k = (\gamma z)^*_k \verk (z^*_k)^{-1} \; .
\end{equation}
Here the isomorphisms $z^*_k$ and $(\gamma z)^*_k$ are the ones in the upper row of the following commutative diagram, where we have set $\Fh_k = (\pi_k \verk (F^s \otimes_{\kappa} k))^* \Eh'_k$, so that $(\alpha \verk \pi \verk \delta)^*_k \Eh_k = \beta^*_k \Fh_k$. Moreover $\oy_1 := \beta_k (z_k)$ and $\oy_2 := \beta_k ((\gamma z)_k)$ in $\Yh_k (k)$,
\begin{equation}
  \label{eq:12}
  \xymatrix{
\Eh_{x_k}  \ar@{=}[r] & (\beta^*_k \Fh_k)_{z_k} \ar@{=}[d] & \Gamma (\Zh_k , \beta^*_k \Fh_k) \ar[l]_{\overset{z^*_k}{\sim}} \ar[r]^{\overset{(\gamma_z)^*_k}{\sim}} & (\beta^*_k \Fh_k)_{(\gamma_z)_k}  \ar@{=}[r] \ar@{=}[d] & \Eh_{x_k} \\
\Eh_{x_k}  \ar@{=}[r] & (\Fh_k)_{\oy_1} & \Gamma (\Yh_k , \Fh_k) \ar[l]_{\overset{\oy^*_1}{\sim}} \ar[u]^{\beta^*_k}_{\wr} \ar[r]^{\overset{\oy^*_2}{\sim}} & (\Fh_k)_{\oy_2}  \ar@{=}[r] & \Eh_{x_k} \; .
}
\end{equation}
Note here that $\Fh_k$ is already a trivial bundle and that $\Yh_k$ and $\Zh_k$ are both reduced and connected. It follows that all maps in this diagram are isomorphisms. Using \eqref{eq:11} we therefore get the formula:
\begin{equation}
  \label{eq:13}
  \rho_{\Eh , x_{\eo}} (\ogamma) \otimes k = \oy^*_2 \verk (\oy^*_1)^{-1} \; .
\end{equation}
The point $y = \delta_{\oQ_p} (z)$ in $V (\C_p) \subset Y (\C_p)$ lies above $x$ and we have $\gamma y = \delta_{\oQ_p} (\gamma z)$. Moreover the relations
\begin{equation}
  \label{eq:14}
  (F^s \otimes_{\kappa} k) (\oy_1) = y_k \quad \mbox{and} \quad (F^s \otimes_{\kappa} k) (\oy_2) = (\gamma y)_k =\ogamma_k (y_k)
\end{equation}
hold because $\gamma y = \ogamma y$ implies that $(\gamma y)_k = (\ogamma y)_k = \ogamma_k (y_k)$. Setting $\Gh_k = (F^s \otimes_{\kappa} k)^* \Eh'_k$, a bundle on $\eX'_k$, we have $\Fh_k = \pi^*_k \Gh_k$.

Next we look at representations of Nori's fundamental group. For the point $\ox_1 = \pi_k (\oy_1)$ in $\eX'_k (k)$ we have $(F^s \otimes k) (\ox_1) = x'_k$. 

Consider the commutative diagram:
\begin{equation}
  \label{eq:16}
\xymatrix{
  \pi_1 (\eX'_k , \ox_1) \ar[r]^{\lambda_{\Gh_k , \ox_1}} \ar[d]^{\wr}_{(F^s \otimes k)_*} & \GL ((\Gh_k)_{\ox_1}) \ar@{=}[d] \\
\pi_1 (\eX'_k , x'_k) \ar[r]^{\lambda_{\Eh'_k , x'_k}} \ar[d]_{\alpha_{k*}} & \GL ((\Eh'_k)_{x'_k}) \ar@{=}[d] \\
\pi_1 (\eX^{\red}_k , x_k) \ar[r]^{\lambda_{\Eh^{\red}_k , x_k}} & \GL (\Eh_{x_k}) \; .
}
\end{equation}
It is obtained by passing to the groups of $k$-valued points in the corresponding diagram for representations of Nori's fundamental group schemes. Recall that as observed above $\Eh^{\red}_k$ is an essentially finite bundle on $\eX^{\red}_k$. The fact that $(F^s \otimes k)_*$ is an isomorphism on fundamental groups was shown in Proposition \ref{t3}. Let $\tgamma_k \in \pi_1 (\eX'_k , \ox_1)$ be the element with $(F^s \otimes k)_* (\tgamma_k) = \ogamma_k$. Using the diagrams \eqref{eq:10} and \eqref{eq:16}, theorem \ref{t1} will follow once we have shown the equation
\begin{equation}
  \label{eq:17}
  \rho_{\Eh , x_{\eo}} (\ogamma) \otimes k = \lambda_{\Gh_k , \ox_1} (\tgamma_k) \quad \mbox{in} \; \GL (\Eh_{x_k}) \; .
\end{equation}

We now use the description of $\rho_{\Eh , x_{\eo}} \otimes k$ in formula \eqref{eq:13} and the one of $\lambda_{\Gh_k , \ox_1}$ in Proposition \ref{t2n} applied to the finite \'etale covering $\pi_k : \Yh_k \to \eX'_k$ which trivializes $\Gh_k$. It follows that \eqref{eq:17} is equivalent to the following diagram being commutative where we recall that $\Fh_k = \pi^*_k \Gh_k$:
\begin{equation}
  \label{eq:18}
  \xymatrix{
    \Eh_{x_k} \ar@{=}[d]  \ar@{=}[r] & (\Fh_k)_{\oy_1} & \Gamma (\Yh_k , \Fh_k) \ar[l]_{\overset{\oy^*_1}{\sim}} \ar[r]^{\overset{\oy^*_2}{\sim}}  & (\Fh_k)_{\oy_2} \ar@{=}[r] & \Eh_{x_k} \ar@{=}[d] \\
\Eh_{x_k} \ar@{=}[r] & (\Fh_k)_{\oy_1} & \Gamma (\Yh_k , \Fh_k) \ar[l]_{\overset{\oy^*_1}{\sim}} \ar[r]^{\overset{\tgamma_k (\oy_1)^*}{\sim}}  & (\Fh_k)_{\tgamma_k (\oy_1) } \ar@{=}[r] & \Eh_{x_k} \; .
}
\end{equation}
But this is trivial since we have $ \oy_2 = \tgamma_k (\oy_1)$. Namely \eqref{eq:14} implies the equations:
\[
(F^s \otimes k) (\oy_2) = \ogamma_k (y_k) = \ogamma_k ((F^s \otimes k) (\oy_1)) = (F^s \otimes k) (\tgamma_k (\oy_1)) 
\]
and $F^s \otimes k$ is injective on $k$-valued points because $F$ is universally injective.


\begin{example} 
\label{t6}
\em The following example shows that in general the representation 
\[
\rho_{\Eh , x_{\eo}} : \pi_1 (X,x) \to \GL (\Eh_{x_{\eo}})
\]
in theorem \ref{t1} does not factor over the specialization map $\pi_1 (X,x) \to \pi_1 (\eX^{\red}_k , x_k)$. Let $\eX$ be an elliptic curve over $\overline{\Z}_p$ whose reduction $\eX_k$ is supersingular. Then we have $\eX^{\red}_k = \eX_k$ and $\pi_1 (\eX_k , 0) (p) = 0$. The exact functor $E \mapsto \rho_{E,0}$ of \cite{DW2} or \cite{DW4} induces a homomorphism
\[
\rho_* : \Ext^1_{X_{\C_p}} (\Oh, \Oh) \longrightarrow \Ext^1_{\pi_1 (X,0)} (\C_p , \C_p) = \Hom (\pi_1 (X,0) , \C_p) \; .
\]
Here the second $\Ext$-group refers to the category of finite dimensional $\C_p$-vector spaces with a continuous $\pi_1 (X,0)$-operation. Moreover, $\Hom$ refers to continuous homomorphisms. In \cite{DW1} Corollary 1, by comparing with Hodge--Tate theory it is shown that $\rho_*$ is injective. For an extension of vector bundles $0 \to \Oh \to E \to \Oh \to 0$ on $X_{\C_p}$ the corresponding representation $\rho_{E,0}$ of $\pi_1 (X,0)$ on $\GL (E_0)$ is unipotent of rank $2$ and described by the additive character
\[
\rho_* ([E]) \in \Hom (\pi_1 (X,0) , \C_p) = \Hom (\pi_1 (X,0) (p) , \C_p) \; .
\]
In particular $\rho_{E,0}$ factors over $\pi_1 (X,0) (p)$ and $\rho_{E,0}$ is trivial if and only if $[E] = 0$. Thus any extension $[\Eh]$ in $H^1 (\eX , \Oh)$ whose restriction to $H^1 (X, \Oh)$ is non-trivial has a non-trivial associated representation
\[
\rho_{\Eh , 0} : \pi_1 (X,0) \longrightarrow \GL (\Eh_0) \; .
\]
Since $\rho_{\Eh,0}$ factors over $\pi_1 (X,0) (p)$ it cannot factor over $\pi_1 (\eX_k ,0)$ because then it would factor over $\pi_1 (\eX_k ,0) (p) = 0$.
\end{example}

\end{document}